\newtheorem*{thma}{Main Theorem}
\newtheorem*{ucor}{Corollary}
\newtheorem{thm}{Theorem}[section]
\newtheorem{lemma}[thm]{Lemma}
\newtheorem{cor}[thm]{Corollary}
\newtheorem{claim}{Claim}[thm]
\newtheorem{prop}[thm]{Proposition}
\newtheorem{fact}[thm]{Fact}
\theoremstyle{definition}
\newtheorem{defn}[thm]{Definition}
\theoremstyle{remark}
\newtheorem{remark}[thm]{Remark}
\DeclareMathOperator{\cf}{cf}
\DeclareMathOperator{\otp}{otp}
\DeclareMathOperator{\acc}{acc}
\DeclareMathOperator{\nacc}{nacc}
\newcommand\s{\subseteq}
\newcommand\sq{\sqsubseteq}
\newcommand\br{\blacktriangleright}
\newcommand*\axiomfont[1]{\textsf{\textup{#1}}}
\newcommand\zfc{\axiomfont{ZFC}}
\newcommand\gch{\axiomfont{GCH}}
\newcommand\Mid{\Biggm|}
\renewcommand\mid{\mathrel{|}\allowbreak}
\title{Souslin trees at successors of regular cardinals}
\author{Assaf Rinot}
\address{Department of Mathematics, Bar-Ilan University, Ramat-Gan 5290002, Israel.}
\thanks{This research was partially supported by the European Research Council (grant agreement ERC-2018-StG 802756) and by the Israel Science Foundation (grant agreement 2066/18)}
\subjclass[2010]{Primary 03E05; Secondary 03E65.}
\begin{document}
\begin{abstract} We present a weak sufficient condition for the existence of Souslin trees at successor of regular cardinals.
The result is optimal and simultaneously improves an old theorem of Gregory and a more recent theorem of the author.
\end{abstract}

\maketitle
\section*{Introduction}

In \cite{MR485361}, Gregory proved that for every (regular) uncountable cardinal $\lambda=\lambda^{<\lambda}$,
if $2^\lambda=\lambda^+$ and there exists a non-reflecting stationary subset of $E^{\lambda^+}_{<\lambda}$,
then there exists a $\lambda^+$-Souslin tree.
A special case of a result from \cite{paper24} asserts that for every uncountable cardinal $\lambda=\lambda^{<\lambda}$,
if $2^\lambda=\lambda^+$ and $\square(\lambda^+)$ holds, then there exists a $\lambda^+$-Souslin tree.
By results from inner model theory, Gregory's theorem implies that if $\gch$ holds,
and there are no $\aleph_2$-Souslin trees, then $\aleph_2$ is a Mahlo cardinal in $\mathrm{L}$,
and our theorem implies that if $\gch$ holds,
and there are no $\aleph_2$-Souslin trees, then $\aleph_2$ is a weakly compact cardinal in $\mathrm{L}$.
While the former corollary follows from the latter, the combinatorial theorem of Gregory does not follow from ours.
The purpose of this note is to present a new combinatorial theorem that implies both:

\begin{thma}
Suppose that $\lambda=\lambda^{<\lambda}$ is an uncountable cardinal, and $2^\lambda=\lambda^+$.

If there exists a $\square(\lambda^+,\lambda)$-sequence $\langle \mathcal C_\alpha\mid\alpha<\lambda^+\rangle$
for which $\{ \alpha\in E^{\lambda^+}_{<\lambda}\mid |\mathcal C_\alpha|<\lambda\}$ is stationary,
then there exists a $\lambda^+$-Souslin tree.
\end{thma}

An immediate corollary to the Main Theorem is an optimal improvement to a result from \cite{paper24} that was promised in \cite{paper29}:
\begin{ucor} Suppose that $\lambda$ is a regular uncountable cardinal.

If $\gch$ and $\square(\lambda^+,{<}\lambda)$ both hold,
then there exists a $\lambda^+$-Souslin tree.
\end{ucor}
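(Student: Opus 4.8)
The plan is to read the Corollary off from the Main Theorem, so the only real task is to check that $\gch$ together with $\square(\lambda^+,{<}\lambda)$ supplies all of the data demanded by the hypothesis of the Main Theorem.

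First I would dispose of the cardinal arithmetic. Since $\lambda$ is regular and $\gch$ holds, for every $\mu<\lambda$ we have $\mu<\cf(\lambda)$ and $2^\mu=\mu^+\le\lambda$, so $\lambda^\mu=\lambda$; hence $\lambda^{<\lambda}=\lambda$, and of course $2^\lambda=\lambda^+$. Thus the standing hypotheses of the Main Theorem hold for our $\lambda$.

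Next, fix a witness $\langle\mathcal C_\alpha\mid\alpha<\lambda^+\rangle$ to $\square(\lambda^+,{<}\lambda)$. By definition this is in particular a $\square(\lambda^+,\lambda)$-sequence, and it carries the additional feature that $|\mathcal C_\alpha|<\lambda$ for \emph{every} $\alpha<\lambda^+$. In consequence, $\{\alpha\in E^{\lambda^+}_{<\lambda}\mid |\mathcal C_\alpha|<\lambda\}$ equals $E^{\lambda^+}_{<\lambda}$, which contains the stationary set $E^{\lambda^+}_\omega$ (here we use that $\lambda$ is uncountable) and is therefore itself stationary in $\lambda^+$. Feeding this sequence into the Main Theorem yields a $\lambda^+$-Souslin tree, as required.

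There is essentially no obstacle to overcome: the entire combinatorial burden sits in the Main Theorem, and the Corollary amounts to the remark that $\square(\lambda^+,{<}\lambda)$ automatically produces a $\square(\lambda^+,\lambda)$-sequence for which the relevant index set is not merely stationary but all of $E^{\lambda^+}_{<\lambda}$. (The \emph{optimality} alluded to in the statement — that the stationarity clause in the Main Theorem cannot be dropped, equivalently that $\square(\lambda^+,\lambda)$ by itself need not yield a $\lambda^+$-Souslin tree — is a separate consistency-theoretic matter and plays no role in this deduction.)
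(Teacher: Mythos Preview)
Your argument is correct and matches the paper's intent exactly: the paper presents the Corollary as an ``immediate corollary'' of the Main Theorem and gives no separate proof, and what you wrote is precisely the unpacking of why it is immediate --- $\gch$ supplies $\lambda^{<\lambda}=\lambda$ and $2^\lambda=\lambda^+$, while any $\square(\lambda^+,{<}\lambda)$-sequence is a $\square(\lambda^+,\lambda)$-sequence for which the relevant set is all of $E^{\lambda^+}_{<\lambda}$, hence stationary.
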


The corollary is indeed optimal, since $\gch$ implies that $\square(\lambda^+,\lambda)$ holds for every regular cardinal $\lambda$ (in fact, with a witnessing
sequence $\langle \mathcal C_\alpha\mid\alpha<\lambda^+\rangle$
satisfying $|\mathcal C_\alpha|=1$ for all $\alpha\in E^{\lambda^+}_\lambda$),
whereas by a recent striking result of Asper\'o and Golshani \cite{arxiv1809.07638}, $\zfc+\gch$ is
consistent with the non-existence of a $\lambda^+$-Souslin tree for any prescribed value of a regular uncountable $\lambda$.

\subsection*{Notation and conventions} Throughout this note,
$\kappa$ and $\lambda$ stand for arbitrary regular uncountable cardinals.
Write $[\kappa]^{<\lambda}$ for the collection of all subsets of $\kappa$ of cardinality less than $\lambda$.
Denote $E^\kappa_\lambda:=\{\alpha<\kappa\mid \cf(\alpha)=\lambda\}$
and $E^\kappa_{<\lambda}:=\{\alpha<\kappa\mid \cf(\alpha)<\lambda\}$.

Suppose that $C$ and $D$ are sets of ordinals.
Write $C\sq D$ iff there exists some ordinal $\beta$ such that $C = D \cap \beta$.
Write $\acc(C):=\{\alpha\in C\mid \sup (C\cap\alpha) = \alpha>0 \}$, $\nacc(C) := C \setminus \acc(C)$,
and $\acc^+(C) := \{\alpha<\sup(C)\mid \sup (C\cap\alpha) = \alpha>0 \}$.

\section{Square principles and Souslin trees}

\begin{defn}
For any cardinal $\mu$, $\square(\kappa,{<}\mu)$ asserts
the existence of a sequence $\langle \mathcal C_\alpha\mid\alpha<\kappa\rangle$ such that:
\begin{enumerate}
\item For every limit ordinal $\alpha<\kappa$:
\begin{itemize}
\item$\mathcal C_\alpha$ is a nonempty collection of club subsets of $\alpha$, with $| \mathcal C_\alpha |<\mu$;
\item for every $C \in \mathcal C_\alpha$ and $\bar\alpha\in\acc(C)$, we have $C\cap\bar\alpha\in\mathcal C_{\bar\alpha}$;
\end{itemize}
\item For every club $D\s\kappa$, there is $\alpha\in\acc(D)$ such that $D\cap\alpha\notin\mathcal C_\alpha$.
\end{enumerate}
\end{defn}

\begin{remark}\begin{enumerate}
\item Note that there are no restrictions on $\otp(C)$ for $C\in\mathcal C_\alpha$.
\item We write $\square(\kappa,\mu)$ for $\square(\kappa,{<}\mu^+)$, and write $\square(\kappa)$ for $\square(\kappa,1)$.
\end{enumerate}
\end{remark}

To connect Gregory's theorem with the Main Theorem, let us point out the following.

\begin{prop} Suppose that $\lambda^{<\lambda}=\lambda$ and there exists a non-reflecting stationary subset of $E^{\lambda^+}_{<\lambda}$.
Then there exists a $\square(\lambda^+,\lambda)$-sequence $\langle \mathcal C_\alpha\mid\alpha<\lambda^+\rangle$
for which $\{ \alpha\in E^{\lambda^+}_{<\lambda}\mid |\mathcal C_\alpha|<\lambda\}$ is stationary.
\end{prop}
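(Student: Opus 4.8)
The plan is to construct $\langle\mathcal C_\alpha\mid\alpha<\lambda^+\rangle$ by recursion on $\alpha$, letting $\mathcal C_\alpha$ be a singleton whenever $\alpha\in S$ or $\cf(\alpha)=\lambda$ and a $\leq\lambda$-sized family of ``short'' coherent clubs otherwise, and then reading non-threadability off the singletons sitting at the points of cofinality $\lambda$. For the set-up: $\lambda^{<\lambda}=\lambda$ makes $\lambda$ regular, and it is uncountable by assumption; fix a non-reflecting stationary $S\subseteq E^{\lambda^+}_{<\lambda}$, so that $S\cap\alpha$ is non-stationary in $\alpha$ whenever $\cf(\alpha)>\omega$. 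For each $\alpha\in S$ fix a club $e_\alpha\subseteq\alpha$ with $\otp(e_\alpha)=\cf(\alpha)$ and $e_\alpha\cap S=\emptyset$ --- a cofinal $\omega$-sequence of successor ordinals if $\cf(\alpha)=\omega$, and a suitable thinning of a club disjoint from $S$ if $\cf(\alpha)>\omega$. For each $\alpha\in E^{\lambda^+}_\lambda$ fix a club $d_\alpha\subseteq\alpha$ with $\otp(d_\alpha)=\lambda$ and $d_\alpha\cap S=\emptyset$; this is possible because $\cf(\alpha)=\lambda>\omega$ makes $S\cap\alpha$ non-stationary, so one may run an increasing continuous sequence of length $\lambda$ inside a club of $\alpha$ disjoint from $S$ (the relevant suprema stay below $\alpha$ and hence land in that club).

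The recursion, for limit $\alpha<\lambda^+$, is: $\mathcal C_\alpha:=\{e_\alpha\}$ if $\alpha\in S$; $\mathcal C_\alpha:=\{d_\alpha\}$ if $\cf(\alpha)=\lambda$; and otherwise (so $\alpha\notin S$ and $\cf(\alpha)<\lambda$)
\[\mathcal C_\alpha:=\{\,C\mid C\text{ is a club in }\alpha,\ \otp(C)<\lambda,\ \acc(C)\cap S=\emptyset,\ \text{and }C\cap\bar\alpha\in\mathcal C_{\bar\alpha}\text{ for all }\bar\alpha\in\acc(C)\,\}.\]
The key step is to prove, by induction on $\alpha$, that \emph{every club $C\subseteq\alpha$ with $\otp(C)<\lambda$ and $\acc(C)\cap S=\emptyset$ satisfies $C\cap\bar\alpha\in\mathcal C_{\bar\alpha}$ for every $\bar\alpha\in\acc(C)$}. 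This is routine: such a $\bar\alpha$ is outside $S$ (as $\bar\alpha\in\acc(C)\subseteq C$) and has $\cf(\bar\alpha)\leq\otp(C\cap\bar\alpha)<\otp(C)<\lambda$, so $\bar\alpha$ falls under the third clause, and $C\cap\bar\alpha$ is a club in $\bar\alpha$ of order type $<\lambda$ whose accumulation points are disjoint from $S$ and which, by the induction hypothesis at $\bar\alpha$, also satisfies the coherence requirement --- hence meets every condition for membership in $\mathcal C_{\bar\alpha}$.

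Everything else now follows. Downward coherence is built into the third clause; for $\mathcal C_\alpha=\{e_\alpha\}$ it is the key claim applied at $\alpha$; and for $\mathcal C_\alpha=\{d_\alpha\}$ it holds because a proper initial segment $d_\alpha\cap\bar\alpha$ of the order-type-$\lambda$ club $d_\alpha$ has order type $<\lambda$, so $\bar\alpha$ again falls under the third clause and the claim applies. Nonemptiness at the third-clause points follows from the claim by producing a short club disjoint from $S$ (or, when $\cf(\alpha)=\omega$, any cofinal $\omega$-sequence, whose accumulation set is empty). For cardinality, $|\mathcal C_\alpha|=1$ at the singleton points, while at a third-clause point every member of $\mathcal C_\alpha$ is a subset of $\alpha$ of size $<\lambda$, so $|\mathcal C_\alpha|\leq|[\alpha]^{<\lambda}|\leq|[\lambda]^{<\lambda}|=\lambda^{<\lambda}=\lambda$; this is the sole place the hypothesis $\lambda^{<\lambda}=\lambda$ enters.

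Finally, for non-threadability, suppose toward a contradiction that $D\subseteq\lambda^+$ is a club with $D\cap\alpha\in\mathcal C_\alpha$ for all $\alpha\in\acc(D)$, and let $\alpha$ be the element occupying position $\lambda\cdot2$ in the increasing (continuous) enumeration of $D$. Then $\alpha\in\acc(D)$, $\otp(D\cap\alpha)=\lambda\cdot2$, and $\cf(\alpha)=\cf(\lambda\cdot2)=\lambda$, so $\mathcal C_\alpha=\{d_\alpha\}$ forces $D\cap\alpha=d_\alpha$, which has order type $\lambda\neq\lambda\cdot2$. Thus $\langle\mathcal C_\alpha\mid\alpha<\lambda^+\rangle$ is a $\square(\lambda^+,\lambda)$-sequence, and since $|\mathcal C_\alpha|=1<\lambda$ for every $\alpha\in S$ and $S\subseteq E^{\lambda^+}_{<\lambda}$ is stationary, the additional requirement is satisfied. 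The part needing the most care is the order-type bookkeeping in the key claim; what might first look like the real obstacle --- bounding $|\mathcal C_\alpha|$ at the points of cofinality $\lambda$, where $2^\lambda$ clubs live --- dissolves once one commits to a single club $d_\alpha$ of order type exactly $\lambda$, which simultaneously delivers non-threadability essentially for free.
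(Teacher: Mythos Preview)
Your proof is correct and follows essentially the same approach as the paper: put a singleton at points of $S\cup E^{\lambda^+}_\lambda$ (a short club disjoint from $S$), and at the remaining limit points of small cofinality collect all short clubs avoiding $S$. The only cosmetic differences are that the paper uses the slightly stronger condition $C\cap S=\emptyset$ in place of your $\acc(C)\cap S=\emptyset$, states the third-clause definition non-recursively (your key claim shows the coherence clause is redundant anyway), and witnesses non-threadability with $\otp(D\cap\alpha)=\lambda+\omega$ rather than $\lambda\cdot2$.
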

\begin{proof} Fix a subset $S\s E^{\lambda^+}_{<\lambda}$ which is stationary and non-reflecting.
We now define $\vec{\mathcal C}:=\langle \mathcal C_\alpha\mid\alpha<\lambda^+\rangle$, as follows:

$\br$ Let $\mathcal C_0:=\{\emptyset\}$.

$\br$ For all $\alpha<\lambda^+$, let $\mathcal C_{\alpha+1}:=\{\{\alpha\}\}$.

$\br$ For all $\alpha\in S\cup E^{\lambda^+}_\lambda$, since $S$ is non-reflecting, we may fix a club $C_\alpha$ in $\alpha$ of order-type $\cf(\alpha)$ which is disjoint from $S$.
Now, let $\mathcal C_\alpha:=\{C_\alpha\}$.

$\br$ For all $\alpha\in\acc(\lambda^+)\setminus(S\cup E^{\lambda^+}_{\lambda})$, let $\mathcal C_\alpha$ be the collection of all clubs $C$ in $\alpha$
such that $\otp(C)<\lambda$ and $C\cap S=\emptyset$. As $\cf(\alpha)<\lambda$ and as $S$ is non-reflecting, we know that $\mathcal C_\alpha$
is nonempty. As $\lambda^{<\lambda}=\lambda$, we also know that $|\mathcal C_\alpha|\le\lambda$.

Let us verify that $\vec{\mathcal C}$ is as sought:
\begin{itemize}
\item Evidently, $\{ \alpha\in E^{\lambda^+}_{<\lambda}\mid |\mathcal C_\alpha|<\lambda\}$ covers the stationary set $S$.
\item Fix arbitrary $\alpha\in\acc(\lambda^+)$,  $C\in\mathcal C_\alpha$ and $\bar\alpha\in\acc(C)$. There are two options:

$\br$ If $\alpha\in S\cup E^{\lambda^+}_\lambda$, then $\cf(\bar\alpha)\le\otp(C\cap\bar\alpha)<\otp(C)=\cf(\alpha)\le \lambda$.
Also, $C=C_\alpha$ is disjoint from $S$, so that, altogether, $\bar\alpha\in\acc(\lambda^+)\setminus(S\cup E^{\lambda^+}_\lambda)$.
It now follows from the definition of $\mathcal C_{\bar\alpha}$ that  $C\cap\bar\alpha\in\mathcal C_{\bar\alpha}$.

$\br$ Otherwise, $C$ is a club $\alpha$ such that
$\cf(\bar\alpha)\le\otp(C\cap\bar\alpha)<\otp(C)<\lambda$ and $C\cap S=\emptyset$.
It again follows from the definition of $\mathcal C_{\bar\alpha}$ that  $C\cap\bar\alpha\in\mathcal C_{\bar\alpha}$.

\item Given any club $D$ in $\lambda^+$, pick $\alpha\in D$ such that $\otp(D\cap\alpha)=\lambda+\omega$. Then $\alpha\in\acc(D)$ and $D\cap\alpha\not\in\mathcal C_\alpha$.\qedhere
\end{itemize}
\end{proof}

As mentioned earlier, even in the presence of $\gch$, $\square(\kappa,{<}\kappa)$ does not imply the existence of a $\kappa$-Souslin tree.
For this, Brodsky and the author have introduced the following slight strengthening of $\square(\kappa,{<}\kappa)$:

\begin{defn}[\cite{paper26}]
$\boxtimes^*(\kappa)$ asserts the existence of a sequence $\langle \mathcal C_\alpha\mid\alpha<\kappa\rangle$ such that:
\begin{enumerate}
\item For every limit ordinal $\alpha<\kappa$:
\begin{itemize}
\item$\mathcal C_\alpha$ is a nonempty collection of club subsets of $\alpha$, with $| \mathcal C_\alpha |<\kappa$;
\item for every $C \in \mathcal C_\alpha$ and $\bar\alpha\in\acc(C)$, we have $C\cap\bar\alpha\in\mathcal C_{\bar\alpha}$;
\end{itemize}
\item For every cofinal $X\s\kappa$, there is $\alpha\in\acc(\kappa)$ such that $\sup(\nacc(C)\cap X)=\alpha$ for all $C \in \mathcal C_\alpha$.
\end{enumerate}
\end{defn}

In this paper, we shall not construct Souslin trees (we refer the reader to \cite{paper22} for background and definitions);
all we need is encapsulated in the following fact.
\begin{fact}[\cite{paper26}]\label{thefacts} $\boxtimes^*(\kappa)+\diamondsuit(\kappa)$ entails the existence of a $\kappa$-Souslin tree.
\end{fact}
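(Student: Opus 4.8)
The plan is to run the standard recursive construction of a normal splitting tree $T=\bigcup_{\alpha<\kappa}T_\alpha\s{}^{<\kappa}2$, using a witnessing sequence $\vec{\mathcal C}=\langle\mathcal C_\alpha\mid\alpha<\kappa\rangle$ for $\boxtimes^*(\kappa)$ to build the limit levels coherently, and a $\diamondsuit(\kappa)$-sequence to seal maximal antichains. One keeps $\emptyset\ne T_\alpha\s{}^\alpha 2$ with $|T_\alpha|<\kappa$, keeps $T\restriction\alpha:=\bigcup_{\beta<\alpha}T_\beta$ downward closed, and splits at successors (both $t^\frown\langle 0\rangle$ and $t^\frown\langle 1\rangle$ enter $T_{\alpha+1}$ for each $t\in T_\alpha$). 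A splitting tree of height $\kappa$ with no antichain of size $\kappa$ has no cofinal branch, so it suffices to arrange --- via the sealing --- that every maximal antichain of $T$ has size ${<}\kappa$; then $T$ is $\kappa$-Souslin.

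The heart of the matter is the limit stage. Fix a $\diamondsuit(\kappa)$-sequence $\langle Z_\alpha\mid\alpha<\kappa\rangle$ and a level-respecting coding of ${}^{<\kappa}2$ by ordinals below $\kappa$, so that, on a stationary set, $\langle Z_\alpha\rangle$ guesses subsets of $T\restriction\alpha$ tagged with a ``routing'' flag. For a limit $\alpha<\kappa$, a node $x\in T\restriction\alpha$, and $C\in\mathcal C_\alpha$, I would define by recursion along $C$ a branch $b^C_x\colon\alpha\to 2$ extending $x$: at a point $\bar\alpha\in\acc(C)$ one takes the union of what was built below, which lies in $T_{\bar\alpha}$ since $C\cap\bar\alpha\in\mathcal C_{\bar\alpha}$ by coherence and, inductively, equals $b^{C\cap\bar\alpha}_x$; at a point $\gamma\in\nacc(C)$ one consults $Z_\gamma$, and whenever it codes a maximal antichain of $T\restriction\gamma$ carrying the routing flag, one extends the branch built so far so as to pass above an element of that antichain (possible by maximality and the branching freedom retained in the construction), extending canonically otherwise; one fills the levels strictly between consecutive points of $C$ canonically. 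Setting $T_\alpha:=\{b^C_x\mid C\in\mathcal C_\alpha,\ x\in T\restriction\alpha\}$ and closing under restrictions, one has $|T_\alpha|\le|\mathcal C_\alpha|\cdot|T\restriction\alpha|<\kappa$, the level is nonempty, and --- since the recipe for $b^C_x$ below $\gamma$ depends only on $C\cap\gamma$, on $x$, and on $\langle Z_\delta\mid\delta<\gamma\rangle$ --- one gets $b^{C\cap\bar\alpha}_x=b^C_x\restriction\bar\alpha$ for every $\bar\alpha\in\acc(C)$, which is exactly the coherence that makes the $\acc(C)$-steps legitimate.

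For the sealing, let $A$ be a maximal antichain of $T$; we show $|A|<\kappa$. The set $X_A$ of $\gamma<\kappa$ for which $A\cap(T\restriction\gamma)$ is a maximal antichain of $T\restriction\gamma$ is a club, and by $\diamondsuit(\kappa)$ the set $S_A$ of $\gamma$ for which $Z_\gamma$ codes $A\cap(T\restriction\gamma)$ with the routing flag on is stationary, so $X_A\cap S_A$ is a cofinal subset of $\kappa$. Apply clause (2) of $\boxtimes^*(\kappa)$ to $X:=X_A\cap S_A$ to obtain $\alpha\in\acc(\kappa)$ with $\sup(\nacc(C)\cap X_A\cap S_A)=\alpha$ for every $C\in\mathcal C_\alpha$. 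Then for each $x\in T\restriction\alpha$ and each $C\in\mathcal C_\alpha$, the recursion building $b^C_x$ meets cofinally many $\gamma\in\nacc(C)\cap X_A\cap S_A$, and at such a $\gamma$ it is routed above an element of $A$; hence every node of $T_\alpha$ extends an element of $A$. Since no node is ever inserted strictly below level $\alpha$ at a later stage, $A$ stays a maximal antichain of $T$, and no element of $A$ has length exceeding $\alpha$; thus $A\s T\restriction(\alpha+1)$ and $|A|<\kappa$, as required.

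I expect the real obstacle to be the interlocking of the two guessing mechanisms with the coherence demand: one must code the nodes of the $\kappa$-sized tree by ordinals so that bare $\diamondsuit(\kappa)$ suffices to guess its maximal antichains, and one must phrase the branch recipe purely locally --- through $C\cap\gamma$ --- so that the branches cohere, since any global dependence on $C$ would make the $\acc(C)$-unions ill-defined; one must also implement the routing so that the element above which $b^C_x$ is to be threaded genuinely exists (this is where an elementary-submodel reformulation of the $\diamondsuit$-guess, or an explicit amalgamation argument inside the limit level, does the work). The remaining ingredients --- nonemptiness and downward closure of the levels, normality, the club-ness of $X_A$, and the ``faithfulness'' that nothing is added below level $\alpha$ later --- are routine bookkeeping. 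All of this is carried out in \cite{paper26} via the microscopic/proxy-principle machinery.
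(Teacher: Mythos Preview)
The paper does not prove this statement at all: it is quoted as a black-box Fact from \cite{paper26}, and the surrounding text explicitly says ``In this paper, we shall not construct Souslin trees\ldots\ all we need is encapsulated in the following fact.'' So there is no proof in the paper to compare against.

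That said, your outline is a faithful sketch of the construction carried out in \cite{paper26} (and, more generally, of the microscopic approach of \cite{paper22}). The ingredients you identify---coherent branch recipes $b^C_x$ depending only on $C\cap\gamma$, the use of clause~(1) of $\boxtimes^*(\kappa)$ to make the $\acc(C)$-unions land in the tree, and the application of clause~(2) to a cofinal set produced by intersecting the $\diamondsuit$-guessing set with the club of ``local maximality'' levels---are exactly right, and your diagnosis of where the work lies (local coding so that plain $\diamondsuit(\kappa)$ suffices, and phrasing the recipe so that coherence holds) matches the issues those papers address. One minor point: in the actual constructions the level $T_\alpha$ is typically taken to be exactly $\{b^C_x\mid C\in\mathcal C_\alpha,\ x\in T\restriction\alpha\}$ without a separate ``closing under restrictions'' step, since downward closure is automatic from the recursion; but this is cosmetic.
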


\section{Proof of the Main Theorem}

\begin{defn}[\cite{paper29}] Suppose that $D$ is a club in $\kappa$. Define a function $\Phi_{D} : \mathcal P(\kappa) \to \mathcal P(\kappa)$ by letting,
for all $x\in\mathcal P(\kappa)$,
\[\Phi_{D}(x):=\begin{cases}
\{ \sup(D\cap\eta)\mid \eta\in x\ \&\ \eta>\min(D)\} &\text{if } \sup(D \cap \sup(x)) = \sup(x); \\
x \setminus \sup(D \cap \sup(x)) &\text{otherwise}.
\end{cases}\]
\end{defn}

\begin{lemma}\label{wide-club-guessing}
Suppose that:
\begin{itemize}
\item $\lambda<\kappa$;
\item $\vec{\mathcal C}=\langle \mathcal C_\alpha\mid\alpha<\kappa\rangle$ is a $\square(\kappa,\kappa)$-sequence;
\item $S$ is a stationary subset of $\{ \alpha\in \acc(\kappa)\mid |\mathcal C_\alpha|<\lambda\}$.
\end{itemize}

Then there exists a club $D\s\kappa$
such that for every club $E\s\kappa$,
there exists $\alpha\in S$ such that $\sup(\nacc(\Phi_D(C))\cap E)=\alpha$ for all $C\in\mathcal C_\alpha$.
\end{lemma}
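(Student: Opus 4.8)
The plan is to argue by contradiction, running a club‑guessing style iteration of length $\lambda$. Assume that no club is as in the conclusion: for every club $D\s\kappa$ fix a club $E_D\s\kappa$ such that for every $\alpha\in S$ there is some $C\in\mathcal C_\alpha$ with $\sup(\nacc(\Phi_D(C))\cap E_D)<\alpha$. Using the hypothesis $\lambda<\kappa=\cf(\kappa)$, I would recursively build a $\s$‑decreasing continuous sequence $\langle D_i\mid i\le\lambda\rangle$ of clubs in $\kappa$ by setting $D_0:=\kappa$, $D_{i+1}:=\acc(D_i\cap E_{D_i})$ (a club contained in $E_{D_i}$), and $D_i:=\bigcap_{j<i}D_j$ at limit $i$, which remains club since $i\le\lambda<\cf(\kappa)$. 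Write $D:=D_\lambda$ and $E:=E_D$, so that each successor step has "killed" the counterexample of the previous level in the sense that $D_{i+1}\s E_{D_i}$ for $i<\lambda$, while $E$ is the counterexample still to be dealt with at the top.

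Next I would localize. Since $\acc(D)\cap\acc(E)$ is club, $S^{*}:=S\cap\acc(D)\cap\acc(E)$ is stationary, and for each $\alpha\in S^{*}$ we have $\alpha\in\acc^{+}(D_i)$ for all $i\le\lambda$, so $\Phi_{D_i}$ acts on clubs in $\alpha$ by the projection clause and $\Phi_{D_i}(C)$ is a club in $\alpha$ contained in $D_i$. For each $\alpha\in S^{*}$ and each $i\le\lambda$ pick $C^{\alpha}_{i}\in\mathcal C_\alpha$ with $\sup(\nacc(\Phi_{D_i}(C^{\alpha}_{i}))\cap E_{D_i})<\alpha$; since $|\mathcal C_\alpha|<\lambda=\cf(\lambda)$, there are $C^{\alpha}\in\mathcal C_\alpha$ and an unbounded $I^{\alpha}\s\lambda$ with $C^{\alpha}_{i}=C^{\alpha}$ for all $i\in I^{\alpha}$. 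Setting $\gamma^{\alpha}_{i}:=\sup(\nacc(\Phi_{D_i}(C^{\alpha}))\cap E_{D_i})$ and using a second pigeonhole (partition $I^{\alpha}$ according to where $\gamma^{\alpha}_{i}$ lands relative to a fixed cofinal sequence in $\alpha$) one may further assume, shrinking $I^{\alpha}$ but keeping it unbounded in $\lambda$, that $\gamma^{\alpha}:=\sup\{\gamma^{\alpha}_{i}\mid i\in I^{\alpha}\}<\alpha$. Because $D\s D_{i+1}\s E_{D_i}$, this yields that for every $i\in I^{\alpha}$ the $\nacc$‑points of $\Phi_{D_i}(C^{\alpha})$ lying above $\gamma^{\alpha}$ all avoid $D$, i.e.\ they sit inside $D_i\setminus D$.

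The heart of the argument is to turn this into a contradiction with clause~(2) of $\square(\kappa,\kappa)$. Here I would invoke the basic properties of $\Phi$ established in \cite{paper29}: that $\Phi_D$ carries a club in a limit point of $D$ to a club subset of $D$ with the same supremum; that $\Phi$ respects nesting of clubs, with $\Phi_{D'}\circ\Phi_{D''}$ agreeing with $\Phi_{D'}$ whenever $D'\s D''$; that $\Phi_D$ maps a coherent sequence to a coherent sequence; a reversibility property, whereby a thread through the $\Phi_D$‑image of $\vec{\mathcal C}$ pulls back to a thread through $\vec{\mathcal C}$ itself; and — crucially — a description of $\nacc(\Phi_{D'}(x))$ in terms of $\nacc(x)$ and $\nacc(D')$. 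Chasing the last of these up the tower $\langle D_i\mid i\in I^{\alpha}\rangle$ and using that the $\nacc$‑points are confined to $D_i\setminus D$, the accumulated thinning across the $\lambda$ many levels should force $\Phi_{D}(C^{\alpha})=\Phi_{D}(\Phi_{D_i}(C^{\alpha}))$ to coincide, above $\gamma^{\alpha}$, with a tail of $D$. After pressing down with Fodor's lemma to stabilize $\gamma^{\alpha}=\gamma^{*}$ on a stationary $S'\s S^{*}$, and combining the coherence of the $\Phi_D$‑image of $\vec{\mathcal C}$ with the fact that these tails of $D$ cohere, the end‑segment $D\setminus\gamma^{*}$ (after transferring back through $\Phi_D$) threads $\vec{\mathcal C}$, contradicting clause~(2).

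I expect the genuinely delicate point to be precisely this last transition: pinning down how $\nacc(\Phi_{D_i}(C^{\alpha}))$ thins as $i$ runs up $I^{\alpha}$, and why the confinement of its upper $\nacc$‑points to $D_i\setminus D$, accumulated over $\lambda$ stages, forces the eventual agreement of $\Phi_D(C^{\alpha})$ with a tail of $D$ — this is where the fine combinatorics of $\Phi$ and the coherence of $\vec{\mathcal C}$ must be used in tandem, and it is also the place where the correct bookkeeping of $\cf(\alpha)$ in the second pigeonhole (to keep $\gamma^{\alpha}<\alpha$) matters. By contrast, the outer $\lambda$‑length recursion, the two pigeonhole steps, and the concluding Fodor argument should be routine.
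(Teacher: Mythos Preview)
Your setup --- the $\lambda$-length decreasing iteration of clubs and the first pigeonhole stabilizing $C^\alpha\in\mathcal C_\alpha$ on an unbounded $I^\alpha\s\lambda$ --- matches the paper. The gap is in the endgame.

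The claim that ``the accumulated thinning across the $\lambda$ many levels should force $\Phi_D(C^\alpha)$ to coincide, above $\gamma^\alpha$, with a tail of $D$'' is false in general, and the argument cannot be completed along these lines. A simple order-type obstruction: if $\cf(\alpha)=\omega$ and $C^\alpha$ has order-type $\omega$, then $\Phi_D(C^\alpha)$ also has order-type $\omega$, while $D\cap(\gamma^\alpha,\alpha)$ can have order-type as large as $\alpha$. More broadly, knowing that $\nacc(\Phi_{D_i}(C^\alpha))\cap(\gamma^\alpha,\alpha)\s D_i\setminus D$ for each $i\in I^\alpha$ tells you nothing about which points of $D$ lie in $\Phi_D(C^\alpha)$; it only says that the $D_i$-projections of the gaps of $C^\alpha$ miss $D$. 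The ``reversibility property'' you invoke --- pulling a thread through the $\Phi_D$-image back to a thread through $\vec{\mathcal C}$ --- is also not a property $\Phi_D$ enjoys; coherence is preserved forward, not backward.

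What the paper actually does after stabilizing $C_\delta$ is split on whether $\sup(E\cap\delta\setminus C_\delta)=\delta$ for some $\delta\in S$ (where $E=E_\lambda$). If so, one fixes $\beta\in(E\cap\delta)\setminus C_\delta$ above the relevant bound, sets $\gamma:=\min(C_\delta\setminus\beta)$, and observes that $\min(\Phi_{E_i}(C_\delta)\setminus\beta)=\sup(E_i\cap\gamma)$; along a suitable $\omega$- or $\omega_1$-indexed subsequence of $I_\delta$ these values stabilize, producing a $\nacc$-point of $\Phi_{E_{i_n}}(C_\delta)$ in $E_{i_{n+1}}$ above the supposed bound --- a direct contradiction. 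If instead $\sup(E\cap\delta\setminus C_\delta)<\delta$ for \emph{all} $\delta\in S$, press down to get $\epsilon$ with $E\cap\delta\setminus(\epsilon+1)\s C_\delta$ on a stationary $S'$, so $B:=\acc(E\setminus\epsilon)$ satisfies $B\cap\delta\s\acc(C_\delta)$. Now the sets $C_\delta\cap\beta$ for $\beta\in B$, ordered by $\sq$, form a tree whose level at $\beta$ injects into $\mathcal C_\beta$ and hence has size $<\lambda$ along $S'$; Kurepa's lemma yields a cofinal branch, whose union is a club threading $\vec{\mathcal C}$, contradicting clause~(2) of $\square(\kappa,\kappa)$. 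The thread is through the \emph{original} $C_\delta$'s --- $\Phi$ plays no role in this case.
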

\begin{remark}
All ingredients for the upcoming proof may already be found in \cite{paper29}.
For completeness, we give here a self-contained proof that avoids various concepts that appear in \cite{paper29}.
\end{remark}
\begin{proof}[Proof of Lemma]
Suppose not.
Then, for every club $D \subseteq \kappa$, we may find a club $E^D \subseteq \kappa$ such that, for every $\delta\in S$,
there is $C^D_\delta \in \mathcal C_\delta$ with $$\sup(\nacc(\Phi_D(C^D_\delta)) \cap E^D)<\delta.$$

Define a sequence $\langle E_i\mid i \leq \lambda \rangle$ of clubs in $\kappa$, by recursion, as follows:
\begin{itemize}
\item Set $E_0:=\kappa$;
\item For all $i< \lambda$, set $E_{i+1} := E^{E_i} \cap E_i$;
\item For all $i\in\acc(\lambda +1)$, set $E_i:=\bigcap_{j<i}E_j$.
\end{itemize}

Write $E:=E_{\lambda}$.
For each $\delta\in S$,
since $\{ C^{E_i}_\delta \mid i<\lambda \} \subseteq \mathcal C_\delta$, and
$\lambda=\cf(\lambda)>| \mathcal C_\delta |$,
we may pick $C_\delta\in\mathcal C_\delta$ such that
$I_\delta:=\{ i< \lambda \mid C^{E_i}_\delta=C_\delta\}$ is cofinal in $\lambda$.
Now, there are three cases to consider, each leading to a contradiction:

\underline{Case 1.} Suppose that there exists  $\delta\in S\cap E^{\kappa}_{>\omega}$ for which $\sup(E\cap\delta\setminus C_\delta)=\delta$.

Fix such $\delta$ and let $\{ i_n\mid n<\omega\}$ be the increasing enumeration of some subset of $I_\delta$.
Since $\langle E_i \mid i<\lambda\rangle$ is a $\s$-decreasing sequence,
for all $n<\omega$, we have in particular that $E_{i_{n+1}} \subseteq E_{i_n+1} \subseteq E^{E_{i_n}}$, so that $\alpha_n:=\sup(\nacc(\Phi_{E_{i_n}}(C_\delta))\cap E_{i_{n+1}})$ is $<\delta$. Put $\alpha:=\sup_{n<\omega}\alpha_n$.
As $\cf(\delta)>\omega$, we have $\alpha<\delta$.
Fix $\beta\in (E\cap\delta)\setminus C_\delta$ above $\alpha$.
Put $\gamma:=\min(C_\delta\setminus\beta)$.
Then $\delta>\gamma>\beta>\alpha$, and for all $i < \lambda$, since $\beta\in E\s E_i$,
we infer that $\sup( E_i\cap\gamma)\ge\beta$. So it follows from the definition of $\Phi_{E_i}(C_\delta)$ that $\min(\Phi_{E_i}(C_\delta)\setminus\beta)=\sup(E_i\cap \gamma)$ for all $i< \lambda$.
Since $\langle E_{i_n}\mid n<\omega\rangle$ is an infinite $\s$-decreasing sequence, let us fix some $n<\omega$
such that $\sup(E_{i_n}\cap\gamma)=\sup(E_{i_{n+1}}\cap\gamma)$.
Then $\min(\Phi_{E_{i_n}}(C_\delta)\setminus\beta)=\min(\Phi_{E_{i_{n+1}}}(C_\delta)\setminus\beta)$,
and in particular, $\beta^*:=\min(\Phi_{E_{i_n}}(C_\delta)\setminus\beta)$ is in $E_{i_{n+1}}\setminus(\alpha+1)$. Now, there are two options, each leading to a contradiction:
\begin{itemize}
\item[$\br$] If $\beta^*\in\nacc(\Phi_{E_{i_n}}(C_\delta))$, then we get a contradiction to the fact that  $\beta^*>\alpha\ge\alpha_n$.

\item[$\br$] If $\beta^*\in\acc(\Phi_{E_{i_n}}(C_\delta))$, then $\beta^*=\beta$ and $\beta^*\in\acc(C_\delta)$, contradicting the fact that $\beta\notin C_\delta$.
\end{itemize}

\underline{Case 2.} Suppose that there exists $\delta\in S\cap E^{\kappa}_{\omega}$ for which $\sup(E\cap\delta\setminus C_\delta)=\delta$.

Fix such $\delta$, and note that, for all $i\in I_\delta$, the ordinal $\alpha_i:=\sup(\nacc(\Phi_{E_{i}}(C_\delta))\cap E_{i+1})$ is $<\delta$.
So, as $\cf(\delta)\neq\omega_1$,
let $\{ i_\nu\mid \nu<\omega_1\}$ be the increasing enumeration of some subset of $I_\delta$, for which  $\alpha:=\sup_{\nu<\omega_1}\alpha_{i_\nu}$ is $<\delta$.
Fix $\beta\in (E\cap\delta)\setminus C_\delta$ above $\alpha$.
Put $\gamma:=\min(C_\delta\setminus\beta)$.
Then $\delta>\gamma>\beta>\alpha$, and $\min(\Phi_{E_i}(C_\delta)\setminus\beta)=\sup(E_i\cap \gamma)$ for all $i < \lambda$.
Fix some $\nu<\omega_1$
such that $\sup(E_{i_\nu}\cap\gamma)=\sup(E_{i_{\nu+1}}\cap\gamma)$.
Then  $\beta^*:=\min(\Phi_{E_{i_\nu}}(C_\delta)\setminus\beta)$ is in $E_{i_{\nu+1}}\setminus(\alpha+1)$, and as in the previous case, each of the two possible options leads to a contradiction.

\underline{Case 3.}  Suppose that $\sup(E\cap\delta\setminus C_\delta)<\delta$ for all $\delta\in S$.

Fix $\epsilon<\kappa$ for which $S':=\{\delta\in S\mid \sup(E\cap\delta\setminus C_\delta)=\epsilon\}$ is stationary.
Put $B:=\acc(E\setminus\epsilon)$, and note that, for every $\delta\in S'$, we have $B\cap\delta\s\acc(C_\delta)$.
Let $\{\beta_\alpha\mid \alpha<\kappa\}$ denote the increasing enumeration of the club $\{0\}\cup B$.
For all $\alpha<\kappa$, put:
 $$T_\alpha:=\{ C_\delta\cap\beta_\alpha\mid \delta\in S', \beta_\alpha<\delta\}.$$

\begin{claim}\label{claim1221} $(\bigcup_{\alpha<\kappa}T_\alpha,{\sq})$ is a tree whose $\alpha^{\text{th}}$ level is $T_\alpha$,
and $|T_\alpha|\le|\mathcal C_{\beta_\alpha}|$ for all $\alpha <\kappa$.
\end{claim}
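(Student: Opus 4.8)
The plan is to verify the three assertions in turn: that $(\bigcup_{\alpha<\kappa}T_\alpha,{\sq})$ is a tree, that its $\alpha^{\text{th}}$ level is exactly $T_\alpha$, and that $|T_\alpha|\le|\mathcal C_{\beta_\alpha}|$. The key structural input is the coherence clause of the $\square(\kappa,\kappa)$-sequence together with the observation, recorded just before the claim, that $B\cap\delta\s\acc(C_\delta)$ for every $\delta\in S'$; this is what makes the restrictions $C_\delta\cap\beta_\alpha$ land back inside some $\mathcal C_{\beta_\alpha}$.

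First I would show that each element $t=C_\delta\cap\beta_\alpha$ of $T_\alpha$ is itself a member of $\mathcal C_{\beta_\alpha}$: since $\beta_\alpha\in B\cap\delta$ and $B\cap\delta\s\acc(C_\delta)$, we have $\beta_\alpha\in\acc(C_\delta)$, so by coherence of $\vec{\mathcal C}$ applied to $C_\delta\in\mathcal C_\delta$ we get $C_\delta\cap\beta_\alpha\in\mathcal C_{\beta_\alpha}$. This immediately gives $|T_\alpha|\le|\mathcal C_{\beta_\alpha}|$. Next, for the tree structure: I would check that $\sq$ is a partial order on $\bigcup_{\alpha<\kappa}T_\alpha$ (transitivity and antisymmetry of "being an initial segment" are routine), and that for a fixed $t=C_\delta\cap\beta_\alpha\in T_\alpha$ the set of $\sq$-predecessors of $t$ is exactly $\{C_\delta\cap\beta_{\alpha'}\mid \alpha'<\alpha\}$, which is well-ordered by $\sq$ in order type $\alpha$. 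The containment $\{C_\delta\cap\beta_{\alpha'}\mid\alpha'<\alpha\}\s T_{\alpha'}$ for each $\alpha'<\alpha$ holds because $\beta_{\alpha'}<\beta_\alpha<\delta$ and $\delta\in S'$, so each such restriction is a legitimate element of $T_{\alpha'}$; conversely any $s\sq t$ with $s\in T_{\alpha''}$ must equal $t\cap\sup(s)$ and, since $s$ is a club in its supremum and $\beta_{\alpha''}$ enumerates $B$, one identifies $\sup(s)=\beta_{\alpha''}$ and $s=C_\delta\cap\beta_{\alpha''}$. Assembling these points shows the predecessors of any node form a chain of the correct order type, so the structure is a tree and the $\alpha^{\text{th}}$ level is $T_\alpha$.

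The main obstacle I anticipate is the bookkeeping needed to pin down the \emph{level} of a node unambiguously — that is, ruling out that the same set $C_\delta\cap\beta_\alpha$ arises as $C_{\delta'}\cap\beta_{\alpha'}$ for $\alpha'\neq\alpha$ (which would make the "level" ill-defined). This is handled by noting that if $x=C_\delta\cap\beta_\alpha$ is nonempty then $\sup(x)$ is a limit point of $C_\delta$ lying in $B$ (using $B\cap\delta\s\acc(C_\delta)$ and that $\beta_\alpha\in B$), hence $\sup(x)=\beta_{\alpha}$ is recoverable from $x$ alone, so $\alpha$ is determined; the degenerate case $x=\emptyset$ (only $\alpha=0$) is trivial. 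One should also briefly address whether $C_\delta$ can have a maximum below $\beta_\alpha$ — it cannot, again because $\beta_\alpha\in\acc(C_\delta)$ forces $C_\delta\cap\beta_\alpha$ to be cofinal in $\beta_\alpha$ — so the restrictions behave as clubs in $\beta_\alpha$ and the order-type/level assignment is consistent. Everything else is a routine verification of the tree axioms.
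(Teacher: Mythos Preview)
Your proposal is correct and follows essentially the same route as the paper: both arguments use $B\cap\delta\subseteq\acc(C_\delta)$ together with the coherence of $\vec{\mathcal C}$ to get $T_\alpha\subseteq\mathcal C_{\beta_\alpha}$ (hence the cardinality bound), observe that $\sup(t)=\beta_\alpha$ recovers the level of $t$, and then identify the predecessors of $t=C_\delta\cap\beta_\alpha$ explicitly as $\{C_\delta\cap\beta_{\alpha'}\mid\alpha'<\alpha\}$. The only cosmetic difference is that the paper handles $\alpha=0$ up front (noting $T_0=\{\emptyset\}=\mathcal C_{\beta_0}$) rather than as a degenerate case at the end.
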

\begin{proof}
We commence by pointing out that $T_\alpha\s  \mathcal C_{\beta_\alpha}$ for all $\alpha<\kappa$.
Clearly, $T_0=\{\emptyset\}=\mathcal C_0=\mathcal C_{\beta_0}$.
Thus, consider an arbitrary nonzero $\alpha<\kappa$ along with some $t \in T_\alpha$.
Fix $\delta \in S'$ above $\beta_\alpha$ such that $t = C_\delta \cap \beta_\alpha$.
Then $\beta_\alpha \in B \cap \delta \s\acc(C_\delta)$,
so that $C_\delta\cap\beta_\alpha\in \mathcal C_{\beta_\alpha}$. That is, $t\in\mathcal C_{\beta_\alpha}$.

This shows that for all $t\in \bigcup_{\alpha<\kappa}T_{\alpha}$: $$t\in T_{\alpha}\text{ iff }\sup(t)=\beta_{\alpha}.$$

Next, consider arbitrary $\alpha<\kappa$ and $t \in T_\alpha$,
and let $t_\downarrow := \{ s \in \bigcup_{\alpha'<\kappa}T_{\alpha'}\mid s \sq t, s \neq t \}$ be the set of predecessors of $t$.
Fix $\delta \in S'$ above $\beta_\alpha$ such that $t = C_\delta \cap \beta_\alpha$.
We claim that $t_\downarrow = \{ C_\delta \cap \beta_{\alpha'} \mid \alpha' < \alpha \}$,
from which it follows that $(t_\downarrow,{\sq})\cong(\alpha,{\in})$.

Consider $\alpha'<\alpha$.
Then $\beta_{\alpha'} < \beta_\alpha < \delta$,
so that $s := C_\delta \cap \beta_{\alpha'}$ is in $T_{\alpha'}$,
and it is clear that $s$ is a proper initial segment of $t$. That is, $s \in t_\downarrow$.

Conversely, consider $s \in t_\downarrow$.
Fix $\alpha'<\kappa$ such that $s\in T_{\alpha'}$.
By our earlier observation, $\sup(s) = \beta_{\alpha'}$,
so that since $s \sq t$, $s \neq t$, and $\sup(t) = \beta_\alpha$, we must have $\beta_{\alpha'} < \beta_\alpha$, and therefore $\alpha'<\alpha$.
Thus, $s = t\cap\beta_{\alpha'} = (C_\delta\cap\beta_\alpha) \cap \beta_{\alpha'} = C_\delta\cap \beta_{\alpha'}$, as required.
\end{proof}

Consider the stationary set $S'':=\{ \alpha\in S'\mid \alpha=\beta_\alpha\}$.
For each $\alpha\in S''$, we have $|T_\alpha|\le|\mathcal C_\alpha|<\lambda$,
so $T:=\bigcup_{\alpha\in S''}T_\alpha$ ordered by $\sq$ is a $\kappa$-tree each of whose levels has cardinality less than $\lambda$.
Now, by a lemma of Kurepa (see \cite[Proposition 7.9]{MR1994835}), $(T,\sq)$
admits a cofinal branch, i.e., a chain $C\s T$ (with respect to $\sq$)
that satisfies $|C\cap T_\alpha|=1$ for all $\alpha\in S''$.
Put $D := \bigcup C$ and note that $D$ is a club in $\kappa$.
As $\vec{\mathcal C}$ is a $\square(\kappa,\kappa)$-sequence,
let us pick $\beta\in\acc(D)$ such that $D\cap\beta\notin\mathcal C_\beta$.
Now, by definition of $D$, let us pick $t \in C$ such that $D\cap\beta\sq t$.
Then, as $t\in T$, let us pick $\delta\in S'$ above $\sup(t)$ such that $t \sq C_\delta$.
So $D \cap \beta \sq C_\delta$.
As $\beta \in \acc(D)$, we have $\beta\in\acc(C_\delta)$,  and hence $D\cap\beta=C_\delta\cap\beta\in\mathcal C_\beta$,
contradicting the choice of $\beta$.
\end{proof}

\begin{cor}\label{outcome}
Suppose that $\vec{\mathcal C}=\langle \mathcal C_\alpha\mid\alpha<\lambda^+\rangle$ is a $\square(\lambda^+,\lambda)$-sequence
for which $\{ \alpha\in E^{\lambda^+}_{<\lambda}\mid |\mathcal C_\alpha|<\lambda\}$ is stationary.
Then there exists a $\square(\lambda^+,\lambda)$-sequence $\vec{\mathcal C^\bullet}=\langle \mathcal C^\bullet_\alpha\mid\alpha<\lambda^+\rangle$
such that, for every club $E\s\lambda^+$,
there exists $\alpha\in \acc(\lambda^+)\cap E^{\lambda^+}_{<\lambda}$ with $|\mathcal C^\bullet_\alpha|<\lambda$ such that $\sup(\nacc(y)\cap E)=\alpha$ for all $y\in\mathcal C^\bullet_\alpha$.
\end{cor}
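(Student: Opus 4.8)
The plan is to invoke Lemma~\ref{wide-club-guessing} and then transport $\vec{\mathcal C}$ through the club $D$ it provides, using the map $\Phi_D$. Apply the Lemma with $\kappa:=\lambda^+$, with the given $\square(\lambda^+,\lambda)$-sequence, and with $S:=\{\alpha\in E^{\lambda^+}_{<\lambda}\mid|\mathcal C_\alpha|<\lambda\}$, where, replacing $S$ by $S\cap\acc(\lambda^+)$ (which is still stationary), we may assume $S\s\acc(\lambda^+)$, so that $S$ is a stationary subset of $\{\alpha\in\acc(\lambda^+)\mid|\mathcal C_\alpha|<\lambda\}$, as required. This produces a club $D\s\lambda^+$ such that, for every club $E\s\lambda^+$, there is $\alpha\in S$ with $\sup(\nacc(\Phi_D(C))\cap E)=\alpha$ for all $C\in\mathcal C_\alpha$. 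Now set, for every limit ordinal $\alpha<\lambda^+$,
\[\mathcal C^\bullet_\alpha:=\{\Phi_D(C)\mid C\in\mathcal C_\alpha\}.\]
Inspecting the definition of $\Phi_D$ in its two cases---$\sup(D\cap\alpha)=\alpha$, where $\Phi_D(C)=\{\sup(D\cap\eta)\mid\eta\in C,\ \eta>\min(D)\}$ is a subset of $D$, and $\sup(D\cap\alpha)<\alpha$, where $\Phi_D(C)=C\setminus\sup(D\cap\alpha)$---one checks that $\Phi_D(C)$ is a club in $\alpha$ whenever $C$ is; hence each $\mathcal C^\bullet_\alpha$ is a nonempty family of club subsets of $\alpha$ with $|\mathcal C^\bullet_\alpha|\le|\mathcal C_\alpha|\le\lambda<\lambda^+$, and moreover $|\mathcal C^\bullet_\alpha|<\lambda$ for $\alpha\in S$. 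Since $\{\nacc(y)\mid y\in\mathcal C^\bullet_\alpha\}=\{\nacc(\Phi_D(C))\mid C\in\mathcal C_\alpha\}$, the conclusion of the Lemma says exactly that for every club $E\s\lambda^+$ there is $\alpha\in S\s\acc(\lambda^+)\cap E^{\lambda^+}_{<\lambda}$ with $|\mathcal C^\bullet_\alpha|<\lambda$ and $\sup(\nacc(y)\cap E)=\alpha$ for all $y\in\mathcal C^\bullet_\alpha$. So all that remains is to confirm that $\vec{\mathcal C^\bullet}:=\langle\mathcal C^\bullet_\alpha\mid\alpha<\lambda^+\rangle$ is a $\square(\lambda^+,\lambda)$-sequence.

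The heart of the matter is coherence, for which I would isolate the claim: \emph{whenever $C$ is a club in a limit ordinal $\alpha<\lambda^+$ and $\bar\alpha\in\acc(\Phi_D(C))$, one has $\bar\alpha\in\acc(C)$ and $\Phi_D(C)\cap\bar\alpha=\Phi_D(C\cap\bar\alpha)$.} Granting it, coherence of $\vec{\mathcal C^\bullet}$ is immediate: for $C\in\mathcal C_\alpha$ and $\bar\alpha\in\acc(\Phi_D(C))$, coherence of $\vec{\mathcal C}$ gives $C\cap\bar\alpha\in\mathcal C_{\bar\alpha}$, whence $\Phi_D(C)\cap\bar\alpha=\Phi_D(C\cap\bar\alpha)\in\mathcal C^\bullet_{\bar\alpha}$. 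The claim itself is handled by the same case split. If $\sup(D\cap\alpha)<\alpha$, then $\Phi_D(C)=C\setminus\sup(D\cap\alpha)$, so $\acc(\Phi_D(C))\s\acc(C)$, and any $\bar\alpha\in\acc(\Phi_D(C))$ exceeds $\sup(D\cap\alpha)=\max(D\cap\alpha)$, so that $\sup(D\cap\bar\alpha)=\sup(D\cap\alpha)$ and therefore $\Phi_D(C\cap\bar\alpha)=(C\cap\bar\alpha)\setminus\sup(D\cap\alpha)=\Phi_D(C)\cap\bar\alpha$. If $\sup(D\cap\alpha)=\alpha$, then $\Phi_D(C)\s D$, so any $\bar\alpha\in\acc(\Phi_D(C))$, being a limit point of $D$, lies in $\acc(D)$; putting $\eta^*:=\sup\{\eta\in C\mid\sup(D\cap\eta)<\bar\alpha\}$, the fact that $\Phi_D(C)\cap\bar\alpha$ is cofinal in $\bar\alpha$, together with the monotonicity of $\eta\mapsto\sup(D\cap\eta)$, yields $\eta^*<\alpha$, $\eta^*\in\acc(C)$ and $\sup(D\cap\eta^*)=\bar\alpha$, and then---using $\bar\alpha\in\acc(D)$---that in fact $\eta^*=\bar\alpha$ (otherwise $\{\eta\in C\mid\sup(D\cap\eta)<\bar\alpha\}$ would coincide with $C\cap\bar\alpha$, hence be bounded strictly below $\eta^*$, contradicting $\eta^*\in\acc(C)$); unwinding $\Phi_D$ now gives $\Phi_D(C)\cap\bar\alpha=\{\sup(D\cap\eta)\mid\eta\in C\cap\bar\alpha,\ \eta>\min(D)\}=\Phi_D(C\cap\bar\alpha)$.

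Finally, non-triviality (clause~(2)) is a free consequence of the guessing property already established: suppose toward a contradiction that there is a club $\tilde C\s\lambda^+$ with $\tilde C\cap\alpha\in\mathcal C^\bullet_\alpha$ for all $\alpha\in\acc(\tilde C)$. Applying the guessing property to the club $E:=\acc(\tilde C)$ yields $\alpha$ with $\sup(\nacc(y)\cap\acc(\tilde C))=\alpha$ for every $y\in\mathcal C^\bullet_\alpha$; since then $\nacc(y)\cap\acc(\tilde C)$ is cofinal in $\alpha$, we get $\sup(\acc(\tilde C)\cap\alpha)=\alpha$ and hence $\alpha\in\acc(\tilde C)$, so $\tilde C\cap\alpha\in\mathcal C^\bullet_\alpha$; but $\nacc(\tilde C\cap\alpha)\cap\acc(\tilde C)=\nacc(\tilde C)\cap\acc(\tilde C)\cap\alpha=\emptyset$, whose supremum is $0\ne\alpha$---a contradiction. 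I expect the coherence claim to be the main obstacle---specifically the case $\sup(D\cap\alpha)=\alpha$, where one must see that an accumulation point of the collapsed club $\Phi_D(C)$ is automatically an accumulation point of $C$ and that $\Phi_D$ commutes with truncation there; the remaining verifications are routine bookkeeping with the definition of $\Phi_D$.
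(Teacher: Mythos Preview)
Your proof is correct and follows essentially the same route as the paper's: invoke Lemma~\ref{wide-club-guessing} to obtain the club $D$, set $\mathcal C^\bullet_\alpha:=\{\Phi_D(C)\mid C\in\mathcal C_\alpha\}$ at limits, and verify the $\square(\lambda^+,\lambda)$ clauses together with the guessing property. The only difference is that the paper outsources the key coherence claim about $\Phi_D$ (that $\Phi_D(C)$ is a club in $\alpha$, that $\acc(\Phi_D(C))\subseteq\acc(C)$, and that $\Phi_D(C)\cap\bar\alpha=\Phi_D(C\cap\bar\alpha)$) to \cite[Lemma~2.2]{paper29}, whereas you supply a direct argument; your non-triviality check via $E:=\acc(\tilde C)$ is also the same as the paper's.
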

\begin{proof}
Appeal to Lemma~\ref{wide-club-guessing} with $\kappa:=\lambda^+$,
$\vec{\mathcal C}$, and
$S:=\{ \alpha\in \acc(\kappa)\cap E^\kappa_{<\lambda}\mid |\mathcal C_\alpha|<\lambda\}$,
and let $D\s\lambda^+$ be the outcome club.
Define $\vec{\mathcal C^\bullet}=\langle\mathcal C_\alpha^\bullet\mid\alpha<\lambda^+\rangle$ as follows:
\begin{itemize}
\item[$\br$] $\mathcal C^\bullet_0:=\{\emptyset\}$.

\item[$\br$] For all $\alpha<\lambda^+$, $\mathcal C^\bullet_{\alpha+1}:=\{\{\alpha\}\}$.

\item[$\br$] For all $\alpha\in\acc(\lambda^+)$, let $\mathcal C_\alpha^\bullet:=\{ \Phi_D(C)\mid C\in\mathcal C_\alpha\}$.
\end{itemize}

By \cite[Lemma~2.2]{paper29}, for all $\alpha\in\acc(\lambda^+)$ and $C\in\mathcal C_\alpha$,
$\Phi_D(C)$ is a club in $\alpha$
satisfying that, for all $\bar\alpha\in\acc(\Phi_D(C))$,  $\bar\alpha\in\acc(C)$ and $\Phi_D(C)\cap\bar\alpha=\Phi_D(C\cap\bar\alpha)\in\mathcal C^\bullet_{\bar\alpha}$.
In addition, by the choice of the club $D$, we know that for every club $E\s\lambda^+$,
there exists $\alpha\in \acc(\lambda^+)\cap E^{\lambda^+}_{<\lambda}$ with $|\mathcal C^\bullet_\alpha|\le|\mathcal C_\alpha|<\lambda$ such that $\sup(\nacc(y)\cap E)=\alpha$ for all $y\in\mathcal C^\bullet_\alpha$.

Finally, given an arbitrary club $D'$ in $\lambda^+$, consider the club $E:=\acc(D')$,
and fix $\alpha\in\acc(\lambda^+)$ such that $\sup(\nacc(y)\cap E)=\alpha$ for all $y\in\mathcal C^\bullet_\alpha$.
It follows that, for all $y\in\mathcal C_\alpha^\bullet$, $\nacc(y)\cap\acc(D'\cap\alpha)\neq\emptyset$, let alone $y\neq D'\cap\alpha$.
\end{proof}

We now arrive at the heart of the matter.

\begin{thm}\label{thm24}
Suppose that $\lambda^{<\lambda}=\lambda$, $2^\lambda=\lambda^+$,
and there exists a $\square(\lambda^+,\lambda)$-sequence $\langle \mathcal C_\alpha\mid\alpha<\lambda^+\rangle$
for which $\{ \alpha\in E^{\lambda^+}_{<\lambda}\mid |\mathcal C_\alpha|<\lambda\}$ is stationary.
Then $\boxtimes^*(\lambda^+)$ holds.
\end{thm}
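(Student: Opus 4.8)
The plan is to feed Corollary~\ref{outcome} into a $\diamondsuit$-driven construction. First, invoke Corollary~\ref{outcome} to fix a $\square(\lambda^+,\lambda)$-sequence $\vec{\mathcal C^\bullet}=\langle\mathcal C^\bullet_\alpha\mid\alpha<\lambda^+\rangle$, and put $S:=\{\alpha\in\acc(\lambda^+)\cap E^{\lambda^+}_{<\lambda}\mid|\mathcal C^\bullet_\alpha|<\lambda\}$. Applying the conclusion of the corollary to intersections of clubs, one gets the strengthening: \emph{for every club $E\s\lambda^+$, the set of $\alpha\in S$ with $\sup(\nacc(y)\cap E)=\alpha$ for all $y\in\mathcal C^\bullet_\alpha$ is stationary.} Now observe that any $\square(\lambda^+,\lambda)$-sequence already fulfils the coherence clause and the cardinality bound $|\mathcal C_\alpha|<\lambda^+$ required in $\boxtimes^*(\lambda^+)$, so the only thing still to be arranged is clause~(2) of $\boxtimes^*(\lambda^+)$ for an arbitrary \emph{cofinal} $X\s\lambda^+$; and when $X$ is a club this is immediate from the displayed statement applied to $E:=\acc^+(X)$. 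Thus the heart of the matter is to upgrade ``club guessing'' to ``cofinal-set guessing'', and this is exactly where $2^\lambda=\lambda^+$ is consumed --- via $\diamondsuit(S)$, which is available since $S\s E^{\lambda^+}_{<\lambda}$.

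The tool I would use is a coherent postprocessing that \emph{splices} guessed points into non-accumulation positions. For a limit ordinal $\alpha$, a club $y\s\alpha$, and a set $z\s\alpha$, set
\[\phi^z_\alpha(y):=y\cup\{\,\min(z\cap(\sup(y\cap\beta),\beta))\mid\beta\in\nacc(y),\ z\cap(\sup(y\cap\beta),\beta)\neq\emptyset\,\}.\]
A short verification shows that $\phi^z_\alpha(y)$ is again a club in $\alpha$ with $\acc(\phi^z_\alpha(y))=\acc(y)$ and $\nacc(\phi^z_\alpha(y))=\nacc(y)\cup\{\text{the spliced-in points}\}$ --- at most one point per ``gap'' of $y$, so no new accumulation points arise --- and, crucially, that the point spliced in near $\beta$ is computed purely from $\beta$, $\sup(y\cap\beta)$ and $z\cap\beta$, whence $\phi^z_\alpha(y)\cap\bar\alpha=\phi^{z\cap\bar\alpha}_{\bar\alpha}(y\cap\bar\alpha)$ for all $\bar\alpha\in\acc(y)$. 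Suppose now that we are handed a sequence $\langle Z_\alpha\mid\alpha<\lambda^+\rangle$, $Z_\alpha\s\alpha$, that (i) \emph{coheres along} $\vec{\mathcal C^\bullet}$, i.e.\ $Z_{\bar\alpha}=Z_\alpha\cap\bar\alpha$ whenever $\bar\alpha\in\acc(C)$ for some $C\in\mathcal C^\bullet_\alpha$, and (ii) \emph{guesses on the good levels}: for every cofinal $X\s\lambda^+$ there is $\alpha\in S$ with $Z_\alpha=X\cap\alpha$ and with $\sup(\nacc(y)\cap\acc^+(X))=\alpha$ for all $y\in\mathcal C^\bullet_\alpha$. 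Then put $\mathcal C^\star_\alpha:=\{\phi^{Z_\alpha}_\alpha(y)\mid y\in\mathcal C^\bullet_\alpha\}$ for limit $\alpha$ (and $\mathcal C^\star_{\alpha+1}:=\{\{\alpha\}\}$, $\mathcal C^\star_0:=\{\emptyset\}$). Using (i) and the coherence of $\phi$, the sequence $\langle\mathcal C^\star_\alpha\rangle$ inherits the coherence clause, and $|\mathcal C^\star_\alpha|\le|\mathcal C^\bullet_\alpha|<\lambda^+$; and given a cofinal $X$, at a level $\alpha$ as in~(ii), for each $y\in\mathcal C^\bullet_\alpha$ the cofinally many $\beta\in\nacc(y)\cap\acc^+(X)$ have $Z_\alpha\cap(\sup(y\cap\beta),\beta)=X\cap(\sup(y\cap\beta),\beta)$ nonempty, so the point spliced into $\phi^{Z_\alpha}_\alpha(y)$ near $\beta$ is $\min(X\cap(\sup(y\cap\beta),\beta))\in X$, yielding $\sup(\nacc(\phi^{Z_\alpha}_\alpha(y))\cap X)=\alpha$, which is clause~(2).

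Everything therefore reduces to producing such a sequence $\langle Z_\alpha\rangle$ --- and, if need be, further adjusting $\vec{\mathcal C^\bullet}$ to make it possible --- and this is the main obstacle: requirement~(i) asks $\langle Z_\alpha\rangle$ to be rigidly coherent along $\vec{\mathcal C^\bullet}$, while requirement~(ii) asks it to be a $\diamondsuit$-sequence, and a bare $\diamondsuit(S)$-sequence is of course not coherent. This is precisely the ``putting a diamond inside the square'' situation: one runs a recursion of length $\lambda^+$, powered by $2^\lambda=\lambda^+$ (and $\lambda^{<\lambda}=\lambda$), that builds $\vec{\mathcal C^\bullet}$ (or a variant of it) and $\langle Z_\alpha\rangle$ hand in hand, using the \emph{non-threadability} of the $\square(\lambda^+,\lambda)$-sequence to reconcile the coherence constraints on $\langle Z_\alpha\rangle$ with the diagonalization needed for guessing, and folding the ``$\acc^+(X)$-good'' demand into the same book-keeping so that both guessing requirements are witnessed at common levels. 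I would carry this out by transcribing the corresponding construction of Brodsky and the author, where the analogous reconciliation is performed; by contrast, the remaining pieces --- verifying the stated properties of $\phi^z_\alpha$, and the closing computation of clause~(2) above --- are routine, bounded bookkeeping.
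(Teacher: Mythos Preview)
Your splicing operator $\phi^z_\alpha$ and the verification that it respects initial segments are fine; the gap is in the production of the sequence $\langle Z_\alpha\rangle$. Requirement~(i) as you state it --- $Z_{\bar\alpha}=Z_\alpha\cap\bar\alpha$ whenever $\bar\alpha\in\acc(C)$ for some $C\in\mathcal C^\bullet_\alpha$ --- overdetermines $Z_\alpha$ at limit stages: since any $C\in\mathcal C^\bullet_\alpha$ is a club in $\alpha$, whenever $\acc(C)$ is cofinal in $\alpha$ (e.g.\ whenever $\cf(\alpha)>\omega$, and typically also otherwise, since no bound on $\otp(C)$ is available) we get $Z_\alpha=\bigcup_{\bar\alpha\in\acc(C)}Z_{\bar\alpha}$, which is \emph{forced} by the earlier stages and leaves no freedom for the diagonalisation in~(ii). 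The ``diamond inside square'' constructions you allude to circumvent this by attaching data to \emph{nonaccumulation} points of the clubs (where coherence imposes no constraint), not by carrying a single subset $Z_\alpha\s\alpha$ with full initial-segment coherence; reformulating your scheme in that style, compatibly with the width-$\lambda$ multiplicity of $\mathcal C^\bullet_\alpha$ and with the club-guessing already secured by Corollary~\ref{outcome}, is exactly the content of the theorem, and you have deferred it to an unspecified transcription.

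The paper's proof avoids this obstacle by a different mechanism. Rather than seeking one coherent guessing sequence, it manufactures $\lambda$~many candidate postprocessings at once: using a $\diamondsuit^*(E^{\lambda^+}_{<\lambda})$-matrix $\langle Z_{\beta,j}\rangle$, fixed bijections, and an Engelking--Kar\l owicz family $\langle f_i\mid i<\lambda\rangle$, it defines for each $i<\lambda$ a two-place function $c_i(\eta,\beta)\in(\eta,\beta]$, and the $i$-th candidate replaces every $\beta\in\nacc(y)$ by $c_i(\sup(y\cap\beta),\beta)$. Because $c_i$ depends only on the pair $(\eta,\beta)$ and on fixed global objects, coherence of each candidate $\langle\mathcal C^i_\alpha\rangle$ is \emph{automatic} --- no recursion, no reconciliation with~(i). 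The remaining work is to show some $i$ succeeds: assuming not, the counterexamples $X_i$ ($i<\lambda$) are jointly coded into one $Z\s\lambda^+$; $\diamondsuit^*$ catches $Z$ on a club, and the club-guessing of $\vec{\mathcal C^\bullet}$ supplies a level $\alpha$ with $\cf(\alpha)<\lambda$ and $|\mathcal C^\bullet_\alpha|<\lambda$; one then writes down a function $f$ on a set $e\in[E\cap\alpha]^{<\lambda}$ recording, for each relevant $\beta$, the index needed to decode $Z\cap\beta$, and invokes Engelking--Kar\l owicz to pick $i$ with $f\s f_i$, whereupon $c_i$ lands in $X_i$ cofinally along every $y\in\mathcal C^\bullet_\alpha$, a contradiction. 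This ``$\lambda$ candidates $+$ $\diamondsuit^*$ $+$ Engelking--Kar\l owicz'' architecture --- trading a single coherent diamond for a small family of automatically coherent colourings, with the correct index chosen \emph{after} the fact --- is the idea your plan is missing.
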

\begin{proof} Let $\vec{\mathcal C^\bullet}=\langle \mathcal C^\bullet_\alpha\mid\alpha<\lambda^+\rangle$ be given by Corollary~\ref{outcome}.
Fix a bijection $\pi:\lambda^+\times\lambda\leftrightarrow\lambda^+$.
Also, for each $\beta<\lambda^+$, fix a bijection $g_\beta:\lambda\leftrightarrow E^{\beta+1}_{<\lambda}\times\lambda$.

By \cite[Lemma~2.1]{MR485361}, $\lambda^{<\lambda}=\lambda$ and $2^\lambda=\lambda^+$ imply together that $\diamondsuit^*(E^{\lambda^+}_{<\lambda})$ holds.
This means that we may fix a matrix $\langle Z_{\beta,j}\mid \beta\in E^{\lambda^+}_{<\lambda}, j<\lambda\rangle$ such that, for every $Z\s\lambda^+$,
for some club $D\s\lambda^+$, we have
$$D\cap E^{\lambda^+}_{<\lambda}\s \{\beta\in E^{\lambda^+}_{<\lambda}\mid \exists j<\lambda(Z\cap\beta=Z_{\beta,j})\}.$$

As $\lambda^{<\lambda}=\lambda$, the main result of \cite{ek} provides us with a sequence $\langle f_i\mid i<\lambda\rangle$
of functions from $\lambda^+$ to $\lambda$, such that, for every function $f:e\rightarrow\lambda$
with $e\in[\lambda^+]^{<\lambda}$, for some $i<\lambda$, we have $f\s f_i$.

Now, let $i<\lambda$ be arbitrary.
First, define a coloring $c_i:[\lambda^+]^2\rightarrow\lambda^+$ by letting, for all $\eta<\beta<\lambda^+$,
$$c_i(\eta,\beta):=\min\{\xi\in(\eta,\beta]\mid \xi=\beta\text{ or } \pi(\xi,i)\in Z_{g_\beta(f_i(\beta))}\}.$$
Then, for every $y\in\mathcal P(\lambda^+)$, let
$$y_i:=\acc(y)\cup\{ c_i(\sup(y\cap\beta),\beta)\mid \beta\in\nacc(y)\}.$$
Finally, for every $\alpha\in\acc(\lambda^+)$, let $\mathcal C_\alpha^i:=\{ y_i\mid y\in\mathcal C_\alpha^\bullet\}$.
Also, let $\mathcal C_0^i:=\{\emptyset\}$, and let $\mathcal C_{\alpha+1}^i:=\{\{\alpha\}\}$ for all $\alpha<\lambda^+$.

\begin{claim}\label{claim1} Suppose that $\alpha\in\acc(\lambda^+)$ and $C\in\mathcal C_\alpha^i$. Then:
\begin{enumerate}
\item $C$ is a club in $\alpha$;
\item For all $\bar\alpha\in\acc(C)$, $C\cap\bar\alpha\in\mathcal C^i_{\bar\alpha}$.
\end{enumerate}
\end{claim}
\begin{proof} Fix a club $y\in\mathcal C_\alpha^\bullet$ such that $C=y_i$.

(1) It is easy to see that for any two successive elements $\eta<\beta$ of the club $y$, we have that $C\cap (\eta,\beta]$ is a singleton.
Consequently, $\sup(C)=\sup(y)=\alpha$, and $\acc^+(C)\s\acc(y)$.
But, by definition of $C=y_i$, we also have $\acc(y)\s C$, so, $C$ is a club in $\alpha$.

(2) Let $\bar\alpha\in\acc(C)$ be arbitrary. By the above analysis, $\bar\alpha\in\acc(y)$, so that $y\cap\bar\alpha\in\mathcal C_{\bar\alpha}$.
But $C\cap\bar\alpha=y_i\cap\bar\alpha=(y\cap\bar\alpha)_i$, and hence $C\cap\bar\alpha\in\mathcal C_{\bar\alpha}^i$.
\end{proof}

\begin{claim} There exists $i<\lambda$ for which $\langle \mathcal C_\alpha^i\mid \alpha<\lambda^+\rangle$ witnesses $\boxtimes^*(\lambda^+)$.
\end{claim}
\begin{proof}
Suppose not. It follows from Claim~\ref{claim1} that for each $i<\lambda$,
we may pick some cofinal subset $X_i\s\lambda^+$ such that, for all $\alpha\in\acc(\lambda^+)$,
for some $C\in\mathcal C^i_{\alpha}$, we have $\sup(\nacc(C)\cap X_i)<\alpha$.

Let $Z:=\pi``\bigcup_{i<\lambda}(X_i\times \{i\})$,
and then fix a club $D$ in $\lambda^+$ such that for all $\beta\in D$:

\begin{itemize}
\item $\pi[\beta\times\lambda]=\beta$;
\item $\sup(X_i\cap\beta)=\beta$ for all $i<\lambda$;
\item if $\cf(\beta)<\lambda$, then there exists $j<\lambda$ with $Z\cap\beta=Z_{\beta,j}$.
\end{itemize}
Consider the club $E:=\acc(D)$.
By the choice of $\vec{\mathcal C^\bullet}$,
we may now pick $\alpha\in\acc(\lambda^+)\cap E^{\lambda^+}_{<\lambda}$ with $|\mathcal C^\bullet_\alpha|<\lambda$ such that $\sup(\nacc(y)\cap E)=\alpha$ for all $y\in\mathcal C^\bullet_\alpha$.
Since $\cf(\alpha)<\lambda$ and $|\mathcal C^\bullet_\alpha|<\lambda$, let us fix some $e\in[E\cap\alpha]^{<\lambda}$ such that $\sup(\nacc(y)\cap e)=\alpha$ for all $y\in\mathcal C^\bullet_\alpha$.
Define a function $f:e\rightarrow\lambda$ by letting, for all $\beta\in e$,
$$f(\beta):=\begin{cases}
\min g_\beta^{-1}\{ (\gamma,j)\in \{\beta\}\times \lambda \mid Z\cap\beta=Z_{\beta,j}\}\\\hfill\text{if }\cf(\beta)<\lambda;\\\\
\min g_\beta^{-1}\left\{ (\gamma,j)\in (D\cap E^\beta_{<\lambda})\times \lambda\Mid \begin{array}{c}Z\cap\gamma=Z_{\gamma,j}\text{, and for all }y\in\mathcal C^\bullet_\alpha,\\\beta\in\nacc(y)\implies\sup(y\cap\beta)<\gamma\end{array}\right\}\\\hfill\text{if }\cf(\beta)=\lambda.
\end{cases}$$
Fix $i<\lambda$ such that $f\s f_i$. By the choice of $X_i$, let us fix $C\in\mathcal C_\alpha^i$ such that $\sup(\nacc(C)\cap X_i)<\alpha$.
Find $y\in\mathcal C^\bullet_\alpha$ such that $C=y_i$.
Fix a large enough $\beta\in\nacc(y)\cap e$  such that $\eta:=\sup(y\cap\beta)$ is greater than $\sup(\nacc(C)\cap X_i)$.
In particular, $c_i(\eta,\beta)$ must be an element of $\nacc(C)\setminus X_i$.
Now, there are two cases to consider, each leading to a contradiction:

$\br$ If $\cf(\beta)<\lambda$, then for some $j<\lambda$, we have $g_\beta(f(\beta))=(\beta,j)$ and $Z\cap\beta=Z_{\beta,j}$.
But $\beta\in e\s E\s D$, so that  $g_\beta(f_i(\beta))=(\beta,j)$,
$\pi[\beta\times\lambda]=\beta$, and $$\{\xi<\beta\mid \pi(\xi,i)\in Z_{g_\beta(f_i(\beta))}\}=\{ \xi<\beta\mid \pi(\xi,i)\in Z\cap\beta\}=X_i\cap\beta.$$
As $\beta\in D$, we have $\sup(X_i\cap\beta)=\beta$,
so, $c_i(\eta,\beta)\in X_i\cap\beta$. This is a contradiction.

$\br$ If $\cf(\beta)=\lambda$, then let $(\gamma,j):=g_\beta(f(\beta))$, so that $\gamma\in D$ and $Z\cap\gamma=Z_{\gamma,j}$.
In particular,
$\{\xi<\gamma\mid \pi(\xi,i)\in Z_{g_\beta(f_i(\beta))}\}=X_i\cap\gamma$ and $\sup(X_i\cap\gamma)=\gamma$.
Since $\beta\in\nacc(y)$,
it also follows that $\eta=\sup(y\cap\beta)<\gamma<\beta$.
Consequently, $c_i(\eta,\beta)\in X_i\cap\beta$. This is a contradiction.
\end{proof}
This completes the proof.
\end{proof}

We are now ready to derive the Main Theorem.

\begin{proof}[Proof of the Main Theorem] By \cite[Lemma~2.1]{MR485361}, $\lambda^{<\lambda}=\lambda$ and $2^\lambda=\lambda^+$ imply together that $\diamondsuit^*(E^{\lambda^+}_{<\lambda})$ holds.
In particular, as $\lambda$ is uncountable, $\diamondsuit(\lambda^+)$ holds.
In addition, by Theorem~\ref{thm24}, $\boxtimes^*(\lambda^+)$ holds.
So, by Fact~\ref{thefacts}, there exists a $\lambda^+$-Souslin tree.\footnote{In fact, instead of appealing to Fact~\ref{thefacts}, we may appeal to a finer theorem from \cite{paper32},
thereby getting a $\lambda^+$-Souslin tree which is moreover $\lambda$-complete.}
\end{proof}

\end{document}